\title[]{Inequalities for the one-dimensional analogous of the Coulomb potential}
\author[]{\'Arp\'ad Baricz}
\address{Institute of Applied Mathematics, John von Neumann Faculty of Informatics, \'Obuda University, 1034 Budapest, Hungary}
\address{Department of Economics, Babe\c{s}-Bolyai University, 400591 Cluj-Napoca, Romania}
\email{arpad.baricz@econ.ubbcluj.ro}
\author[]{Tibor K. Pog\'any}
\address{Institute of Applied Mathematics, John von Neumann Faculty of Informatics, \'Obuda University, 1034 Budapest, Hungary}
\address{Faculty of Maritime Studies, University of Rijeka, 51000 Rijeka, Croatia}
\email{poganj@pfri.hr}
\keywords{Gaussian integral; regularization of the Coulomb potential; Mills' ratio; Tur\'an type
inequalities; functional inequalities; bounds; log-convexity and geometrical convexity.} \subjclass[2010]{33E20, 26D15, 60E15.}
\newtheorem{theorem}{Theorem}
\begin{document}
\maketitle

\begin{abstract}
In this paper our aim is to present some monotonicity and convexity properties for the one dimensional regularization of the Coulomb potential, which has applications in the study of atoms in magnetic fields and which is in fact a particular case of the Tricomi confluent hypergeometric function. Moreover, we present some Tur\'an type inequalities for the function in the question and we deduce from these inequalities some new tight upper bounds for the Mills ratio of the standard normal distribution.
\end{abstract}

\section{\bf Introduction}
\setcounter{equation}{0}

Consider the integral
$$V_q(x)=\frac{2e^{x^2}}{\Gamma(q+1)}\int_x^{\infty}e^{-t^2}(t^2-x^2)^qdt,$$
where $q>-1$ and $x>0.$ This integral can be regarded as the one dimensional regularization of the Coulomb potential, which has applications in the study of atoms in magnetic fields, see \cite{ruskai} for more details. Recently, Ruskai and Werner \cite{ruskai}, and later Alzer \cite{alzer} studied intensively the properties of this integral. In \cite{alzer,ruskai} the authors derived a number of monotonicity and convexity properties for the function $V_q,$ as well as many functional inequalities.

It is important to mention that $V_q$ in particular when $q=0$ becomes
$$m(x)=\frac{1}{\sqrt{2}}V_0\left(\frac{x}{\sqrt{2}}\right)=e^{x^2/2}\int_x^{\infty}e^{-t^2/2}dt,$$
which is the so-called Mills ratio of the standard normal distribution, and appears frequently in economics and statistics. See for example \cite{bariczmills} and the references therein for more details on this function.

The purpose of the present study is to make a contribution to the subject and to deduce some new monotonicity and convexity properties for the function $V_q,$
as well as some new functional inequalities. The paper is organized as follows. In section 2 we present the convexity results concerning the function $V_q$ together with some Tur\'an type inequalities. Note that the convexity results are presented in three equivalent formulations. Section 3 is devoted for concluding remarks. In this section we point out that $V_q$ is in fact a particular case of the Tricomi confluent hypergeometric function, and we deduce some other functional inequalities for $V_q.$ In this section we also point out that the Tur\'an type inequalities obtained in section 2 are particular cases of the recent results obtained by Baricz and Ismail \cite{ismail} for Tricomi confluent hypergeometric functions, however, the proofs are different. Finally, in section 3 we use the Tur\'an type inequalities for the function $V_q$ to derive some new tight upper bounds for the Mills ratio $m$ of the standard normal distribution.

\section{\bf Functional inequalities for the function $V_q$}
\setcounter{equation}{0}

The first main result of this paper is the following theorem. Parts {\bf a} and {\bf b} of this theorem are generalizations of parts {\bf b} and {\bf d} of \cite[Theorem 2.5]{bariczmills}.

\begin{theorem}\label{th1}
The next assertions are true:
\begin{enumerate}
\item[\bf a.] The function $x\mapsto xV_{q}'(x)/V_q(x)$ is strictly decreasing on $(0,\infty)$ for $q>-1.$
\item[\bf b.] The function $x\mapsto x^2V_{q}'(x)$ is strictly decreasing on $(0,\infty)$ for $q>-1.$
\item[\bf c.] The function $x\mapsto x^{-1}V_q'(x)$ is strictly increasing on $(0,\infty)$ for $q\geq0.$
\item[\bf d.] The function $x\mapsto V_{q}'(x)/(xV_q(x))$ is strictly increasing on $(0,\infty)$ for $q\geq 0.$
\end{enumerate}
\end{theorem}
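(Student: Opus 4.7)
My plan starts with the integral representation
$$V_q(x)=\frac{1}{\Gamma(q+1)}\int_{0}^{\infty}e^{-s}s^{q}(s+x^{2})^{-1/2}\,ds,$$
obtained from the substitution $s=t^{2}-x^{2}$ in the defining integral. Setting $J_{k}(x):=\int_{0}^{\infty}e^{-s}s^{q}(s+x^{2})^{-k/2}\,ds$ and differentiating under the integral sign gives $V_{q}=J_{1}/\Gamma(q+1)$, $V_{q}'=-xJ_{3}/\Gamma(q+1)$ and $V_{q}''=(3x^{2}J_{5}-J_{3})/\Gamma(q+1)$. Parts \textbf{b} and \textbf{c} then reduce to pointwise inequalities on the integrand: for \textbf{b}, $\frac{d}{dx}[x^{2}V_{q}'(x)]=-3x^{2}(J_{3}-x^{2}J_{5})/\Gamma(q+1)$ is negative because $x^{2}/(s+x^{2})<1$ forces $x^{2}J_{5}<J_{3}$; for \textbf{c}, $\frac{d}{dx}[V_{q}'(x)/x]=3xJ_{5}/\Gamma(q+1)>0$ is immediate. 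Part \textbf{d} is nearly as direct: from $V_{q}'/(xV_{q})=-J_{3}/J_{1}$ one finds that its derivative equals $x(3J_{1}J_{5}-J_{3}^{2})/J_{1}^{2}$, and the Cauchy--Schwarz inequality applied to the measure $e^{-s}s^{q}\,ds$ with the splitting $(s+x^{2})^{-3/2}=(s+x^{2})^{-1/4}(s+x^{2})^{-5/4}$ gives $J_{3}^{2}\le J_{1}J_{5}$, hence $3J_{1}J_{5}-J_{3}^{2}>0$.

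Part \textbf{a} is the decisive step, and the plan is to reduce it to a Tur\'an-type monotonicity. To that end I first derive the alternative Laplace-type representation
$$V_{q}(x)=\frac{1}{\sqrt{\pi}}\int_{0}^{\infty}t^{-1/2}e^{-x^{2}t}(1+t)^{-q-1}\,dt$$
by inserting $(s+x^{2})^{-1/2}=\pi^{-1/2}\int_{0}^{\infty}t^{-1/2}e^{-(s+x^{2})t}\,dt$ in the first representation and interchanging the order of integration. Integration by parts on $\int_{0}^{\infty}\tfrac{d}{dt}\bigl[t^{1/2}e^{-x^{2}t}\bigr](1+t)^{-q-1}\,dt$ (with both boundary terms vanishing) produces the contiguous identity
$$(q+1)V_{q+1}'(x)=-x\,\bigl(xV_{q}(x)\bigr)'.$$
Combined with the three-term recurrence $V_{q+1}'(x)=2x\bigl(V_{q+1}(x)-V_{q}(x)\bigr)$, itself obtained by a single integration by parts on the $s$-integral representation of $V_{q+1}$, this yields
$$\frac{xV_{q}'(x)}{V_{q}(x)}=(2q+1)-2(q+1)\,\frac{V_{q+1}(x)}{V_{q}(x)}.$$
Because $q+1>0$, part \textbf{a} is equivalent to the strict monotonicity of $x\mapsto V_{q+1}(x)/V_{q}(x)$ on $(0,\infty)$.

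The main obstacle is verifying this last monotonicity. My plan is to differentiate the ratio written as the quotient of the two Laplace integrals above and then symmetrize in the two integration variables. After the swap, the numerator of $(V_{q+1}/V_{q})'$ collapses to
$$\frac{x}{2}\iint_{(0,\infty)^{2}}\frac{(t-s)^{2}}{\sqrt{st}}\,e^{-x^{2}(s+t)}\,(1+t)^{-q-2}(1+s)^{-q-2}\,ds\,dt,$$
which is manifestly non-negative and in fact strictly positive because the integrand is strictly positive off the diagonal $\{t=s\}$. Once the reduction of \textbf{a} to the monotonicity of $V_{q+1}/V_{q}$ has been spotted, the symmetrization step is essentially the same argument that underlies the classical Tur\'an inequalities for the Tricomi confluent hypergeometric function; consequently, the assertion $V_{q-1}(x)V_{q+1}(x)>V_{q}(x)^{2}$ emerges as an equivalent reformulation for $q>0$.
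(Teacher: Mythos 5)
Your proof is correct, and it takes a genuinely different route from the paper's in the places that matter. Both proofs of part \textbf{a} reduce the claim, via $xV_q'(x)=(2q+1)V_q(x)-2(q+1)V_{q+1}(x)$, to the strict monotonicity of $x\mapsto V_{q+1}(x)/V_q(x)$; but the paper obtains that monotonicity by citing a theorem of Alzer, whereas you prove it from scratch by symmetrizing the double integral arising from the derivative of the ratio of the two Laplace-type integrals (your prefactor should come out as $x$ rather than $x/2$ after averaging over the swap $s\leftrightarrow t$, but this is immaterial: only the sign is needed). Similarly, the paper proves parts \textbf{c} and \textbf{d} from the recurrence $V_q'(x)=2x\bigl(V_q(x)-V_{q-1}(x)\bigr)$ together with Alzer's monotonicity results for $V_p-V_q$ and $V_p/V_q$, while you differentiate the moment integrals $J_k$ directly and use the pointwise monotonicity of $(s+x^2)^{-3/2}$ in $x$ for \textbf{c} and the Cauchy--Schwarz bound $J_3^2\le J_1J_5$ for \textbf{d}; a pleasant by-product is that your arguments establish \textbf{c} and \textbf{d} for all $q>-1$ rather than only $q\ge0$, since they never invoke $V_{q-1}$. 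Part \textbf{b} is closest to the paper, which also differentiates the representation $V_q'(x)=-xJ_3(x)/\Gamma(q+1)$, though your exact identity $(x^2V_q'(x))'=-3x^2\bigl(J_3(x)-x^2J_5(x)\bigr)/\Gamma(q+1)$ combined with $x^2J_5<J_3$ is slightly cleaner than the paper's differential inequality $xV_q''(x)+2V_q'(x)<0$. What your approach buys is self-containedness (no appeal to the external ratio and difference monotonicity theorems, which carry the real weight in the paper's one-line arguments for \textbf{a}, \textbf{c}, \textbf{d}); what it costs is a longer derivation of $xV_q'(x)=(2q+1)V_q(x)-2(q+1)V_{q+1}(x)$, which the paper gets in a single step by differentiating $V_q(x)=\frac{x^{2q+1}}{\Gamma(q+1)}\int_0^\infty e^{-x^2u}u^q(1+u)^{-1/2}\,du$ with respect to $x$.
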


\begin{proof}

{\bf a.} Observe that $V_q(x)$ can be rewritten as \cite[Lemma 1]{alzer}
$$V_q(x)=\frac{x^{q+1/2}}{\Gamma(q+1)}\int_0^{\infty}e^{-xs}\frac{s^q}{(x+s)^{1/2}}ds.$$
By using the change of variable $s=ux$ we obtain
\begin{equation}\label{A0}V_q(x)=\frac{x^{2q+1}}{\Gamma(q+1)}\int_0^{\infty}e^{-x^2u}\frac{u^q}{(1+u)^{1/2}}du,\end{equation}
and differentiating with respect to $x$ both sides of this relation we get
$$V_q'(x)=\frac{(2q+1)x^{2q}}{\Gamma(q+1)}\int_0^{\infty}e^{-x^2u}\frac{u^q}{(1+u)^{1/2}}du
-\frac{2x^{2q+2}}{\Gamma(q+1)}\int_0^{\infty}e^{-x^2u}\frac{u^{q+1}}{(1+u)^{1/2}}du.$$
Thus, for $q>-1$ and $x>0$ we obtain the differentiation formula
\begin{equation}\label{differ}
xV_q'(x)=(2q+1)V_q(x)-2(q+1)V_{q+1}(x),
\end{equation}
which in turn implies that
$$\frac{xV_q'(x)}{V_q(x)}=2q+1-2(q+1)\frac{V_{q+1}(x)}{V_q(x)}.$$
Now, recall that \cite[Theorem 7]{alzer} if $p>q>-1,$ then the function $x\mapsto V_p(x)/V_q(x)$ is strictly increasing
on $(0,\infty).$ In particular, the function $x\mapsto {V_{q+1}(x)}/{V_q(x)}$ is strictly increasing on $(0,\infty)$ for $q>-1,$ and by using the above relation we obtain that indeed the function $x\mapsto xV_{q}'(x)/V_q(x)$ is strictly decreasing on $(0,\infty)$ for $q>-1.$

{\bf b.} According to \cite[p. 429]{alzer} we have
\begin{equation}\label{deriv}V_{q}'(x)=-\frac{x}{\Gamma(q+1)}\int_0^{\infty}e^{-t}\frac{t^q}{(x^2+t)^{3/2}}dt.\end{equation}
Observe that for $q>-1$ and $x>0$ we have
\begin{align*}
\left[-\Gamma(q+1)xV_{q}'(x)\right]'&=\left[x^2\int_0^{\infty}e^{-t}\frac{t^q}{(x^2+t)^{3/2}}dt\right]'\\
&=\int_0^{\infty}e^{-t}\frac{xt^q}{(x^2+t)^{3/2}}\left(2-\frac{3x^2}{x^2+t}\right)dt\\
&>-x\int_0^{\infty}e^{-t}\frac{t^q}{(x^2+t)^{3/2}}dt=\Gamma(q+1)V_q'(x).
\end{align*}
In other words, we proved that for $x>0$ and $q>-1$ the differential inequality $$-(xV_{q}'(x))'>V_q'(x),$$ that is,
$$xV_q''(x)+2V_q'(x)<0$$ is valid. Consequently $$(x^2V_q'(x))'=x(2V_q'(x)+xV_q''(x))<0$$
for all $x>0$ and $q>-1,$ which means that indeed the function $x\mapsto x^2V_{q}'(x)$ is strictly decreasing on $(0,\infty)$ for $q>-1.$

{\bf c.} Recall the following differentiation formula \cite[p. 439]{ruskai}
\begin{equation}\label{difvq}
V_q'(x)=2x(V_q(x)-V_{q-1}(x)),
\end{equation}
which holds for $q\geq 0$ and $x>0.$ Here by convention $V_{-1}(x)=1/x,$ see \cite[p. 435]{ruskai}. On the other hand, it is known \cite[Theorem 7]{alzer} that
if $p>q>-1,$ then $x\mapsto V_p(x)-V_q(x)$ is strictly increasing on $(0,\infty).$ Consequently, $$x\mapsto x^{-1}V_q'(x)=2(V_q(x)-V_{q-1}(x))$$ is strictly increasing on $(0,\infty)$ for all $q\geq0.$

{\bf d.} Using again the fact that \cite[Theorem 7]{alzer} if $p>q>-1,$ then the function $x\mapsto V_p(x)/V_q(x)$ is strictly increasing
on $(0,\infty),$ we get that
$$x\mapsto \frac{V_q'(x)}{xV_q(x)}=2\left(1-\frac{V_{q-1}(x)}{V_q(x)}\right)$$
is strictly increasing on $(0,\infty)$ for all $q\geq0.$
\end{proof}

Now, we recall the definition of convex functions with respect to H\"older means or power means. For $a\in\mathbb{R},$ $\alpha\in[0,1]$ and $x,y>0$, the power mean $H_a$ of order $a$ is defined by
$$H_a(x,y)=\displaystyle\left\{\begin{array}{lll} \displaystyle\left({\alpha x^a+(1-\alpha)y^a}\right)^{1/a},&a\neq 0\\ 
x^{\alpha}y^{1-\alpha}\, , & a=0\end{array}\right..$$
We consider the continuous function $\varphi:I\subset(0,\infty)\to (0,\infty),$ and let $H_a(x,y)$
and $H_b(x,y)$ be the power means of order $a$ and $b$ of $x>0$ and $y>0$. For $a,b\in\mathbb{R}$ we say that $\varphi$ is $H_aH_b$-convex or just simply $(a,b)$-convex, if for $a,b\in\mathbb{R}$ and for all $x,y\in I$ we have
$$\varphi(H_a(x,y))\leq H_b(\varphi(x),\varphi(y)).$$
If the above inequality is reversed, then we say that $\varphi$ is $H_aH_b$-concave or simply $(a,b)$-concave. It is worth to note that $(1,1)$-convexity means the usual convexity, $(1,0)$ is the logarithmic convexity and $(0,0)$-convexity is the geometrical (or multiplicative) convexity. Moreover, we mention that if the function $f$ is differentiable, then (see \cite[Lemma 3]{bari}) it is $(a,b)$-convex (concave) if and only if $$x\mapsto x^{1-a}\varphi'(x)[\varphi(x)]^{b-1}$$ is increasing (decreasing).

For the sake of completeness we recall here also the definitions of log-convexity and geometrical convexity. A function $f \colon (0,\infty)\to(0,\infty)$ is said to be logarithmically convex, or simply log-convex, if its natural logarithm $\ln f$ is convex, that is, for all $x,y>0$ and $\lambda\in[0,1]$ we have
   $$f(\lambda x+(1-\lambda)y) \leq \left[f(x)\right]^{\lambda}\left[f(y)\right]^{1-\lambda}.$$
A similar characterization of log-concave functions also holds. By definition, a function $g \colon (0,\infty)\rightarrow(0,\infty)$ is said to be geometrically (or multiplicatively) convex if it is convex with respect to the geometric mean, that is, if for all $x,y>0$ and all $\lambda\in[0,1]$ the inequality
   $$g(x^{\lambda}y^{1-\lambda}) \leq[g(x)]^{\lambda}[g(y)]^{1-\lambda}$$
holds. The function $g$ is called geometrically concave if the above inequality is reversed. Observe that, actually the
geometrical convexity of a function $g$ means that the function $\ln g$ is a convex function of $\ln x$ in
the usual sense. We also note that the differentiable function $f$ is log-convex (log-concave) if and only if
$x \mapsto f'(x)/f(x)$ is increasing (decreasing), while the differentiable function $g$ is geometrically convex (concave) if
and only if the function $x \mapsto xg'(x)/g(x)$ is increasing (decreasing). 

The next result is a reformulation of Theorem \ref{th1} in terms of power means.

\begin{theorem}\label{th2}
The next assertions are true:
\begin{enumerate}
\item[\bf a.] $V_q$ is strictly $(0,0)$-concave on $(0,\infty)$ for $q>-1.$
\item[\bf b.] $V_q$ is strictly $(-1,1)$-concave on $(0,\infty)$ for $q>-1.$
\item[\bf c.] $V_q$ is strictly $(2,1)$-convex on $(0,\infty)$ for $q\geq0.$
\item[\bf d.] $V_q$ is strictly $(2,0)$-convex on $(0,\infty)$ for $q\geq 0.$
\end{enumerate}
In particular, for all $q\geq0$ and $x,y>0$ the next inequalities
\begin{equation}\label{power1}
V_q\left(\sqrt{\frac{x^2+y^2}{2}}\right)<\sqrt{V_q(x)V_q(y)}<V_q(\sqrt{xy})
\end{equation}
\begin{equation}\label{power2}
\frac{V_q(x)+V_q(y)}{2}<V_q\left(\frac{2xy}{x+y}\right)
\end{equation}
are valid. Moreover, the second inequality in \eqref{power1} is valid for all $q>-1,$ as well as the inequality \eqref{power2}.
In each of the above inequalities we have equality if and only if $x=y.$
\end{theorem}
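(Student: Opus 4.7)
The plan is to read each of parts \textbf{a}--\textbf{d} directly off the corresponding parts of Theorem \ref{th1}, by invoking the differential characterization recalled just before the statement: a positive differentiable function $\varphi$ on $(0,\infty)$ is $(a,b)$-convex (concave) precisely when the auxiliary function $x\mapsto x^{1-a}\varphi'(x)[\varphi(x)]^{b-1}$ is increasing (decreasing). So the first step is simply to compute this auxiliary function for each of the four exponent pairs in the statement: with $(a,b)=(0,0)$ it reduces to $xV_q'(x)/V_q(x)$, with $(a,b)=(-1,1)$ to $x^2V_q'(x)$, with $(a,b)=(2,1)$ to $x^{-1}V_q'(x)$, and with $(a,b)=(2,0)$ to $V_q'(x)/(xV_q(x))$.

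These four auxiliary functions are exactly the ones whose monotonicity is established in parts \textbf{a}--\textbf{d} of Theorem \ref{th1}, with the correct directions (decreasing for the concave assertions \textbf{a} and \textbf{b}, increasing for the convex assertions \textbf{c} and \textbf{d}) and the correct ranges of $q$ ($q>-1$ for the first two, $q\geq 0$ for the last two). Hence parts \textbf{a}--\textbf{d} of Theorem \ref{th2} follow at once.

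To derive the pointwise inequalities I would just specialize the defining inequality of $(a,b)$-(con)vexity to $\alpha=1/2$, so that $H_a(x,y)$ becomes respectively the arithmetic, geometric, harmonic or quadratic mean of $x$ and $y$. Concretely, the second inequality in \eqref{power1} is the $\alpha=1/2$ case of $(0,0)$-concavity (part \textbf{a}, valid for $q>-1$); the first inequality in \eqref{power1} is the $\alpha=1/2$ case of $(2,0)$-convexity (part \textbf{d}, requiring $q\geq 0$); and \eqref{power2} is the $\alpha=1/2$ case of $(-1,1)$-concavity (part \textbf{b}, valid for $q>-1$). Strict inequality for $x\neq y$, together with equality when $x=y$, both follow from the fact that the monotonicity asserted in Theorem \ref{th1} is strict.

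The only real obstacle is bookkeeping: keeping track of which pair $(a,b)$ produces which auxiliary function and which power-mean inequality, and being attentive to whether the hypothesis $q>-1$ or the stronger $q\geq 0$ is in force. No new analytic input beyond Theorem \ref{th1} is needed.
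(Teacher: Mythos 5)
Your proposal is correct and is exactly the argument the paper intends: the paper gives no separate proof of Theorem \ref{th2}, presenting it as a direct reformulation of Theorem \ref{th1} via the characterization that $\varphi$ is $(a,b)$-convex (concave) iff $x\mapsto x^{1-a}\varphi'(x)[\varphi(x)]^{b-1}$ is increasing (decreasing), and your matching of the four exponent pairs to parts \textbf{a}--\textbf{d} and of the $\alpha=1/2$ specializations to \eqref{power1} and \eqref{power2} is accurate, including the parameter ranges.
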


Now, we extend some of the results of the above theorem to $(a,b)$-convexity with respect to power means. We note that in the proof of the next theorem we used the corresponding results of Theorem \ref{th1}. Moreover, it is easy to see that parts {\bf a}, {\bf b}, {\bf c} and {\bf d} of Theorem \ref{th3} in particular reduce to the corresponding parts of Theorem \ref{th1}. Thus, in fact the corresponding parts of Theorem \ref{th1} and \ref{th3} are equivalent.

\begin{theorem}\label{th3}
The following assertions are true:
\begin{enumerate}
\item[\bf a.] $V_q$ is strictly $(a,b)$-concave on $(0,\infty)$ for $a,b\leq 0$ and $q>-1.$
\item[\bf b.] $V_q$ is strictly $(a,b)$-concave on $(0,\infty)$ for $b\leq 1$ and $q>-1\geq a.$
\item[\bf c.] $V_q$ is strictly $(a,b)$-convex on $(0,\infty)$ for $a\geq 2,$ $b\geq1$ and $q\geq0.$
\item[\bf d.] $V_q$ is strictly $(a,b)$-convex on $(0,\infty)$ for $a\geq 2,$ $b\geq0$ and $q\geq 0.$
\item[\bf e.] $V_q$ is strictly $(a,b)$-concave on $(0,\infty)$ for $a\leq 1,$ $b\leq -1$ and $q\geq0.$
\end{enumerate}
\end{theorem}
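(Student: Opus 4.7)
The plan is to apply the criterion recalled just before the theorem: $V_q$ is $(a,b)$-convex (resp.\ concave) on $(0,\infty)$ if and only if
$$\Phi_{a,b}(x):=x^{1-a}V_q'(x)[V_q(x)]^{b-1}$$
is increasing (resp.\ decreasing). Since $V_q>0$ and $V_q'<0$, $\Phi_{a,b}$ is strictly negative, and I would factor it as $\Phi_{a,b}=T\cdot C$, with $T$ one of the four monotone negative building blocks
$$\frac{xV_q'(x)}{V_q(x)},\quad x^{2}V_q'(x),\quad \frac{V_q'(x)}{x},\quad \frac{V_q'(x)}{xV_q(x)}$$
from Theorem~\ref{th1}, and $C(x)=x^{\gamma}V_q(x)^{\delta}$ a positive correction whose monotonicity is determined by the signs of $\gamma$, $\delta$ and by $V_q$ being decreasing. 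The general principle is that the product of a strictly monotone negative function with a positive monotone factor of the opposite sense of monotonicity is strictly monotone in the direction of the first factor.

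Parts (a)--(d) match Theorem~\ref{th1}(a)--(d) directly. For (a) I take $T=xV_q'/V_q$, $C=x^{-a}V_q^{b}$; the hypotheses $a\le 0$, $b\le 0$ make $C$ positive non-decreasing, and Theorem~\ref{th1}(a) makes $T$ strictly decreasing, hence $\Phi_{a,b}$ strictly decreasing. For (b) I use $T=x^{2}V_q'$, $C=x^{-1-a}V_q^{b-1}$; the hypotheses $a\le -1$, $b\le 1$ again give $C$ positive non-decreasing. For (c) and (d) I take $T=V_q'/x$, $C=x^{2-a}V_q^{b-1}$ and $T=V_q'/(xV_q)$, $C=x^{2-a}V_q^{b}$ respectively; here $a\ge 2$ combined with $b\ge 1$ (resp.\ $b\ge 0$) makes $C$ positive non-increasing, and the strictly increasing $T$ supplied by Theorem~\ref{th1}(c) (resp.\ (d)) then makes $\Phi_{a,b}$ strictly increasing, yielding strict $(a,b)$-convexity. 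Strictness comes in each case from the strict monotonicity in Theorem~\ref{th1}.

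Part (e) is the delicate case. The (a)-decomposition still handles $a\le 0$, $b\le -1$ (its $C$ is positive non-decreasing), and the (b)-decomposition handles $a\le -1$; but for $a\in(0,1]$, in any of the four decompositions the correction $C$ ceases to be globally monotone, because the rational factor $x^{-a}$ or $x^{-1-a}$ has the wrong sign of exponent and dominates over the vanishing logarithmic derivative $bV_q'/V_q$ as $x\to 0^{+}$ (recall that $-xV_q'/V_q\to 0$ at the origin). The plan is to settle the corner $(a,b)=(1,-1)$ directly, since $\Phi_{1,-1}=-(1/V_q)'$ and so $(1,-1)$-concavity is equivalent to the convexity of $1/V_q$, i.e.\ to the Tur\'an-type inequality
$$2\bigl(V_q'(x)\bigr)^{2}\ge V_q(x)V_q''(x),\qquad x>0,\ q\ge 0,$$
and then to deduce the full region $a\le 1$, $b\le -1$ from the parameter monotonicity of $(a,b)$-concavity for a decreasing function: lowering $a$ decreases $H_a(x,y)$ and hence increases $V_q(H_a(x,y))$, while lowering $b$ decreases $H_b(V_q(x),V_q(y))$, both of which preserve the concavity inequality. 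The Tur\'an inequality is the anticipated main obstacle: the upper bound $V_q''V_q\le (V_q')^{2}-V_q'V_q/x$ coming from Theorem~\ref{th1}(a) is strictly weaker than $2(V_q')^{2}$ because $-xV_q'/V_q<1$, so the estimate is not reducible to any single part of Theorem~\ref{th1}. The natural route is a Cauchy--Schwarz argument applied to the integral representation~\eqref{A0}, paralleling the Tur\'an-type inequalities for Tricomi confluent hypergeometric functions alluded to in the introduction.
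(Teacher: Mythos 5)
Parts (a)--(d) of your proposal are correct and coincide with the paper's proof: the same four factorizations of $x^{1-a}V_q'(x)[V_q(x)]^{b-1}$ into a negative strictly monotone block from Theorem~\ref{th1} times a positive monotone correction $x^{\gamma}V_q^{\delta}$, with the same sign bookkeeping.

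The gap is in part (e), and it is precisely at the point you flag as the ``anticipated main obstacle.'' You correctly reduce the corner case $(a,b)=(1,-1)$ to the convexity of $1/V_q$, i.e.\ to $2(V_q'(x))^2\ge V_q(x)V_q''(x)$, and your extension to the full region $a\le1$, $b\le-1$ via monotonicity of power means in their order (using that $V_q$ is decreasing) is a valid alternative to the paper's direct factorization $x^{1-a}\cdot\bigl(V_q'V_q^{-2}\bigr)\cdot V_q^{b+1}$. But you do not prove the convexity of $1/V_q$, and the route you sketch is unlikely to work. Writing $I_k=\int_0^\infty e^{-t}t^q(x^2+t)^{-k/2}\,dt$, one has $\Gamma(q+1)V_q=I_1$, $\Gamma(q+1)V_q'=-xI_3$, $\Gamma(q+1)V_q''=-I_3+3x^2I_5$, so the target inequality reads $2x^2I_3^2+I_1I_3\ge 3x^2I_1I_5$; the Cauchy--Schwarz inequality gives $I_3^2\le I_1I_5$, which bounds $I_3^2$ from the wrong side. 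Moreover, any argument using only the integral representation \eqref{A0} would be uniform in $q>-1$, whereas the claim is stated (and, per \cite[Theorem 2]{alzer}, known) only for $q\ge0$; Alzer's proof goes through the recurrence $V_q'(x)=2x(V_q(x)-V_{q-1}(x))$, which needs $q\ge0$. The paper closes this step simply by citing \cite[Theorem 2]{alzer}, and that is how you should close it too; as written, part (e) of your argument is incomplete.
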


\begin{proof}
{\bf a.} We consider the functions $u_1,v_1,w_1:(0,\infty)\to\mathbb{R},$ which are defined by
$$u_1(x)=\frac{xV_q'(x)}{V_q(x)},\ \ \ v_1(x)=x^{-a},\ \ \ w_1(x)=V_q^b(x).$$
For $a,b\leq0$ and $q>-1$ the functions $v_1$ and $w_1$ are increasing on $(0,\infty),$ and by using part {\bf a} of Theorem \ref{th1}, we obtain that
the function $$x\mapsto M_q(x)=u_1(x)v_1(x)w_1(x)=x^{1-a}V_q'(x)V_q^{b-1}(x)$$
is strictly decreasing on $(0,\infty).$ Here we used that $u_1(x)<0$ for all $x>0$ and $q>-1.$ According to \cite[Lemma 3]{bari} we obtain that indeed the function $V_q$ is strictly $(a,b)$-concave on $(0,\infty)$ for $a,b\leq 0$ and $q>-1.$

{\bf b.} Similarly, if we consider the functions $u_2,v_2,w_2:(0,\infty)\to\mathbb{R},$ defined by
$$u_2(x)=x^{-a-1},\ \ \ v_2(x)=x^2V_q'(x),\ \ \ w_2(x)=V_q^{b-1}(x),$$
then for $a\leq -1<q$ and $b\leq 1$ the function $$x\mapsto M_q(x)=u_2(x)v_2(x)w_2(x)=x^{1-a}V_q'(x)V_q^{b-1}(x)$$
is strictly decreasing on $(0,\infty).$ Here we used part {\bf b} of Theorem \ref{th1}.

{\bf c.} Analogously, if we consider the functions $u_3,v_3,w_3:(0,\infty)\to\mathbb{R},$ defined by
$$u_3(x)=x^{2-a},\ \ \ v_3(x)=x^{-1}V_q'(x),\ \ \ w_3(x)=V_q^{b-1}(x),$$
then for $a\geq2,$ $b\geq 1$ and $q\geq0$ the function $$x\mapsto M_q(x)=u_3(x)v_3(x)w_3(x)=x^{1-a}V_q'(x)V_q^{b-1}(x)$$
is strictly increasing on $(0,\infty).$ Here we used part {\bf c} of Theorem \ref{th1}.

{\bf d.} If we consider the functions $u_4,v_4,w_4:(0,\infty)\to\mathbb{R},$ defined by
$$u_4(x)=x^{2-a},\ \ \ v_4(x)=x^{-1}V_q'(x)V_q^{-1}(x),\ \ \ w_4(x)=V_q^{b}(x),$$
then for $a\geq 2,$ $b\leq 1,$ $q\geq0,$ the function $$x\mapsto M_q(x)=u_4(x)v_4(x)w_4(x)=x^{1-a}V_q'(x)V_q^{b-1}(x)$$
is strictly increasing on $(0,\infty).$ Here we used part {\bf d} of Theorem \ref{th1}.

{\bf e.} If we consider the functions $u_4,v_4,w_4:(0,\infty)\to\mathbb{R},$ defined by
$$u_5(x)=x^{1-a},\ \ \ v_5(x)=V_q'(x)V_q^{-2}(x),\ \ \ w_5(x)=V_q^{b+1}(x),$$
then for $a\leq 1,$ $b\leq -1,$ $q\geq0,$ the function $$x\mapsto M_q(x)=u_5(x)v_5(x)w_5(x)=x^{1-a}V_q'(x)V_q^{b-1}(x)$$
is strictly decreasing on $(0,\infty).$ Here we used the fact that for $q\geq 0$ the function $1/V_q$ is strictly convex (see \cite[Theorem 2]{alzer}) on $(0,\infty),$ which is equivalent to the fact that $V_q$ is strictly $(1,-1)$-concave on $(0,\infty)$, or to that the function $v_5$ is strictly decreasing on $(0,\infty).$
\end{proof}

The following theorem presents some Tur\'an type inequalities for the function $V_q$. These kind of inequalities are named after the Hungarian mathematician Paul Tur\'an who proved a similar inequality for Legendre polynomials. For more details on Tur\'an type inequalities we refer to the papers \cite{bariczD,ismail} and to the references therein.

\begin{theorem}
For $x>0$ the function $q\mapsto \Gamma(q+1)V_q(x)$ is strictly log-convex on $(-1,\infty),$ and if $q>-1/2$ and $x>0,$ then the next Tur\'an type inequalities hold
\begin{equation}\label{turan}
\frac{(q+2)(2q+1)}{(q+1)(2q+3)}V_q(x)V_{q+2}(x)<V_{q+1}^2(x)<\frac{q+2}{q+1}V_q(x)V_{q+2}(x).
\end{equation}
Moreover, the right-hand side of \eqref{turan} is valid for $q>-1$ and $x>0.$ The left-hand side of \eqref{turan} is sharp as $x$ tends to $0.$
\end{theorem}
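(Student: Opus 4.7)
The plan is to use the Laplace-type representation
$\Gamma(q+1) V_q(x) = \int_0^\infty e^{-s} s^q (x^2+s)^{-1/2}\, ds,$
already exploited in Theorem~\ref{th1}. My first step would be to prove strict log-convexity of $q \mapsto \Gamma(q+1) V_q(x)$ on $(-1,\infty)$ for each fixed $x > 0$ by applying H\"older's inequality to this integral: split $s^{\lambda q_1 + (1-\lambda) q_2}=(s^{q_1})^{\lambda}(s^{q_2})^{1-\lambda}$ and keep $e^{-s}(x^2+s)^{-1/2}$ as a common positive weight. Strictness for $q_1 \neq q_2$ follows from the non-proportionality of the two power functions, and the log-convexity assertion of the theorem is immediate.

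Specialising the log-convexity to the three consecutive values $q, q+1, q+2$ and using $\Gamma(q+2) = (q+1)\Gamma(q+1)$, $\Gamma(q+3) = (q+2)(q+1)\Gamma(q+1)$, one gets the right-hand inequality of \eqref{turan} at once, valid for $q > -1$. For the sharp left-hand constant I would first evaluate $V_q(0) = \frac{2}{\Gamma(q+1)} \int_0^\infty e^{-t^2} t^{2q}\, dt = \Gamma(q+1/2)/\Gamma(q+1)$; a short Gamma-ratio calculation then gives $V_{q+1}(0)^2/(V_q(0) V_{q+2}(0)) = (q+2)(2q+1)/((q+1)(2q+3))$. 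Thus the left-hand constant is precisely the $x \to 0$ limit of $V_{q+1}^2/(V_q V_{q+2})$, which both confirms the sharpness assertion and accounts for the restriction $q > -1/2$ (needed for $V_q(0)$ to be finite).

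The hardest step, and the real obstacle, is the strict inequality $V_{q+1}(x)^2 > \frac{(q+2)(2q+1)}{(q+1)(2q+3)} V_q(x) V_{q+2}(x)$ for $x > 0$; equivalently, strict log-concavity of $q \mapsto V_q(x)/V_q(0)$ at each fixed $x > 0$. A direct logarithmic differentiation of the ratio $V_{q+1}^2/(V_q V_{q+2})$ combined with \eqref{differ} reduces the sign question to a three-term inequality in $r_q = V_{q+1}/V_q$ that is algebraically equivalent to the Turán inequality one is trying to prove, so the purely differential route is circular. To avoid this I would use the Gauss--Laplace form $V_q(x) = (2/\sqrt{\pi}) \int_0^\infty e^{-x^2 t^2}(1+t^2)^{-(q+1)}\, dt$ and the substitution $U = 1/(1+t^2)$ to write $V_q(x)/V_q(0) = E[e^{-x^2(1-U)/U}]$ with $U$ a $\mathrm{Beta}(q+1/2, 1/2)$ random variable; since the $\mathrm{Beta}(q+k+1/2, 1/2)$ densities differ from $\mathrm{Beta}(q+1/2, 1/2)$ only by the factor $U^k$, the required log-concavity in $q$ reduces to the moment inequality $(E[fU])^2 E[U^2] > (E[U])^2 E[f]\, E[fU^2]$ with $f(u) = e^{-x^2(1-u)/u}$. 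The main difficulty lies in verifying this: the Cauchy--Schwarz bound $(E[fU])^2 \leq E[f] E[fU^2]$ goes in the wrong direction, so the specific interplay between the strictly increasing function $f$ and the $\mathrm{Beta}(q+1/2, 1/2)$ measure must be exploited.
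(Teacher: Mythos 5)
Your treatment of the log-convexity of $q\mapsto\Gamma(q+1)V_q(x)$ via H\"older's inequality applied to $\int_0^\infty e^{-t}t^q(x^2+t)^{-1/2}\,dt$, the deduction of the right-hand side of \eqref{turan} from it, and the computation $V_q(0)=\Gamma(q+1/2)/\Gamma(q+1)$ for the sharpness claim all coincide with the paper's argument. The problem is the left-hand side of \eqref{turan}, which you correctly identify as the hard part but do not prove: your proposal ends with an unverified moment inequality $(E[fU])^2E[U^2]>(E[U])^2E[f]E[fU^2]$ for a Beta-distributed $U$, together with the admission that Cauchy--Schwarz runs in the wrong direction. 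That is a genuine gap, not a routine verification.

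Moreover, your dismissal of the ``differential route'' as circular is not accurate, and this is exactly where the paper's proof succeeds. The paper does not try to derive the monotonicity of $r_q=V_{q+1}/V_q$ from the Tur\'an inequality; it imports it as an independent external input, namely Alzer's result \cite[Theorem 7]{alzer} that $x\mapsto V_p(x)/V_q(x)$ is strictly increasing for $p>q>-1$ (already used in Theorem \ref{th1}\textbf{a}). Translating $\bigl(V_{q+1}(x)/V_q(x)\bigr)'>0$ through the differentiation formula \eqref{differ} yields the intermediate Tur\'an-type inequality
$$(q+1)V_{q+1}^2(x)-(q+2)V_q(x)V_{q+2}(x)>-V_q(x)V_{q+1}(x).$$
Separately, since \eqref{deriv} shows $V_q'(x)<0$, the formula \eqref{differ} gives $(2q+1)V_q(x)<2(q+1)V_{q+1}(x)$; for $q>-1/2$ this lets one replace $-V_q(x)V_{q+1}(x)$ by the smaller quantity $-\tfrac{2(q+1)}{2q+1}V_{q+1}^2(x)$, and rearranging produces exactly the left-hand side of \eqref{turan} with the sharp constant. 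No circularity occurs because the two inputs (Alzer's monotonicity theorem and the negativity of $V_q'$) are established independently of the inequality being proved. To complete your proof you would either need to carry out this recurrence argument or actually establish your moment inequality, which you have not done.
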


\begin{proof}
We use the notation $f(q)=\Gamma(q+1)V_q(x).$ Since \cite[p. 426]{alzer}
$$V_q(x)=\frac{1}{\Gamma(q+1)}\int_0^{\infty}e^{-t}\frac{t^q}{(x^2+t)^{1/2}}dt$$
it follows that
$$f(q)=\int_0^{\infty}e^{-t}\frac{t^q}{(x^2+t)^{1/2}}dt.$$ By using the H\"older-Rogers inequality for integrals we obtain that for all $q_1,q_2>-1,$ $q_1\neq q_2,$ $\alpha\in(0,1)$ and $x>0$ we have
\begin{align*}f(\alpha q_1+(1-\alpha)q_2)&=\int_0^{\infty}e^{-t}\frac{t^{\alpha q_1+(1-\alpha)q_2}}{(x^2+t)^{1/2}}dt\\
&=\int_0^{\infty}\left(e^{-t}\frac{t^{q_1}}{(x^2+t)^{1/2}}\right)^{\alpha}\left(e^{-t}\frac{t^{q_2}}{(x^2+t)^{1/2}}\right)^{1-\alpha}dt\\
&<\left(\int_0^{\infty}e^{-t}\frac{t^{q_1}}{(x^2+t)^{1/2}}dt\right)^{\alpha}\left(\int_0^{\infty}e^{-t}\frac{t^{q_2}}{(x^2+t)^{1/2}}dt\right)^{1-\alpha}\\
&=(f(q_1))^{\alpha}(f(q_2))^{1-\alpha},\end{align*}
that is, the function $f$ is strictly log-convex on $(-1,\infty)$ for $x>0.$ Now, choosing $\alpha=1/2,$ $q_1=q$ and $q_2=q+2$ in the above inequality we obtain the Tur\'an type inequality $$f^2(q+1)<f(q)f(q+2)$$ which is equivalent to the inequality
$$V_{q+1}^2(x)<\frac{\Gamma(q+3)\Gamma(q+1)}{\Gamma^2(q+2)}V_q(x)V_{q+2}(x),$$
valid for $q>-1$ and $x>0.$ After simplifications we get the right-hand side of \eqref{turan}.

Now, we focus on the left-hand side of \eqref{turan}. First observe that from \eqref{deriv} it follows that $V_q'(x)<0$ for all $x>0$ and $q>-1.$ In view of the differentiation formula \eqref{differ} this implies that for $x>0$ and $q>-1$ we have
\begin{equation}\label{bound}(2q+1)V_q(x)<2(q+1)V_{q+1}(x).\end{equation}
On the other hand, recall that the function $x\mapsto V_{q+1}(x)/V_q(x)$ is strictly increasing on $(0,\infty)$ for $q>-1,$ that is, the inequality
$$\left(V_{q+1}(x)/V_q(x)\right)'>0$$ is valid for $x>0$ and $q>-1.$ By using \eqref{differ} it can be shown that the above assertion is equivalent to the Tur\'an type inequality
\begin{equation}\label{turan2}(q+1)V_{q+1}^2(x)-(q+2)V_q(x)V_{q+2}(x)>-V_q(x)V_{q+1}(x),\end{equation}
where $x>0$ and $q>-1.$ Combining \eqref{bound} with \eqref{turan2} for $q>-1/2$ and $x>0$ we have
$$(q+1)V_{q+1}^2(x)-(q+2)V_q(x)V_{q+2}(x)>-\frac{2(q+1)}{2q+1}V_{q+1}^2(x),$$
which is equivalent to the left-hand side of \eqref{turan}.

Finally, since
$$V_q(0)=\frac{\Gamma(q+1/2)}{\Gamma(q+1)},$$
it follows that
$$\frac{V_{q+1}^2(0)}{V_q(0)V_{q+2}(0)}=\frac{(q+2)(2q+1)}{(q+1)(2q+3)},$$
and thus indeed the left-hand side of \eqref{turan} is sharp as $x$ tends to $0.$
\end{proof}

\section{\bf Concluding remarks and further results}
\setcounter{equation}{0}

\subsection{Connection with Tricomi confluent hypergeometric functions and Tur\'an type inequalities}

First consider the Tricomi confluent hypergeometric function, called also sometimes as the
confluent hypergeometric function of the second kind,
$\psi(a,c,\cdot),$ which is a particular solution of the so-called confluent
hypergeometric  differential equation $$xw''(x)+(c-x)w'(x)-aw(x)=0$$ and its value is defined in terms of the usual Kummer confluent
hypergeometric function $\Phi(a,c,\cdot)$ as
$$\psi(a,c,x)=\frac{\Gamma(1-c)}{\Gamma(a-c+1)}\Phi(a,c,x)+
\frac{\Gamma(c-1)}{\Gamma(a)}x^{1-c}\Phi(a-c+1,2-c,x).$$ For $a,x>0$ this function possesses the integral representation
   \[ \psi(a,c,x) = \frac{1}{\Gamma(a)} \int_0^\infty e^{-xt}t^{a-1}(1+t)^{c-a-1}dt,\]
   and consequently we have
   \begin{equation}\label{vqdef} V_q(x) = \frac{x^{2q+1}}{\Gamma(q+1)} \int_0^\infty {\rm e}^{-x^2u}\, \frac{u^q}{\sqrt{1+u}}\, {\rm d}u
             = x^{2q+1} \psi(q+1, q+3/2, x^2) \, .\end{equation}
Thus, the Tur\'an type inequality \eqref{turan} can be rewritten as
\begin{equation}\label{turan3}
\frac{(a+1)(2a-1)}{a(2a+1)}<\frac{\psi^2(a+1,a+3/2,x)}{\psi(a,a+1/2,x)\psi(a+2,a+5/2,x)}<
\frac{a+1}{a},
\end{equation}
where $a>1/2$ and $x>0$ on the left-hand side, and $a>0$ and $x>0$ on the right-hand side. Now, applying the Kummer transformation
   \[ \psi(a,c,x) = x^{1-c}\psi(1+a-c,2-c,x),\]
the above Tur\'an type inequality becomes
\begin{equation}\label{turan3}
\frac{c(2c-3)}{(c-1)(2c-1)}<\frac{\psi^2(1/2,c,x)}{\psi(1/2,c-1,x)\psi(1/2,c+1,x)}<
\frac{2c-3}{2c-1},
\end{equation}
where $x>0>c$ on the left-hand side, and $c<1/2$ and $x>0$ on the right-hand side. It is important to mention here that the right-hand side of \eqref{turan3} is not sharp when $c<0$. Namely, in \cite[Theorem 4]{ismail} it was proved that
the sharp Tur\'an type inequality
$$\psi^2(a,c,x)-\psi(a,c-1,x)\psi(a,c+1,x)<0$$
is valid for $a>0>c$ and $x>0$ or $a>c-1>0$ and $x>0.$ This implies that
$$\frac{\psi^2(1/2,c,x)}{\psi(1/2,c-1,x)\psi(1/2,c+1,x)}<1$$
holds for $c<0$ and $x>0$ or $c\in(1,3/2)$ and $x>0,$ and the constant $1$ on the right-hand side of this inequality is the best possible. The above Tur\'an type inequality clearly improves the right-hand side of \eqref{turan3} when $c<0$, and this means that for $q>-1$ and $x>0$ the right-hand side of \eqref{turan} can be improved as follows
\begin{equation}\label{turan4}V_{q+1}^2(x)<V_{q}(x)V_{q+2}(x).\end{equation}
Note also that very recently Baricz and Ismail in \cite[Theorem 4]{ismail} proved
the sharp Tur\'an type inequality
$$\frac{a}{c(a-c+1)}\psi^2(a,c,x)<\psi^2(a,c,x)-\psi(a,c-1,x)\psi(a,c+1,x),$$
which is valid for $a>0>c$ and $x>0.$ This inequality can be rewritten as
$$\frac{c(a-c+1)}{(c-1)(a-c)}<\frac{\psi^2(a,c,x)}{\psi(a,c-1,x)\psi(a,c+1,x)},$$
which for $a=1/2$ reduces to the left-hand side of \eqref{turan3}. It is important to note here that according to \cite[Theorem 4]{ismail}
in the above Tur\'an type inequalities the constants $$a(c(a-c+1))^{-1}\ \ \ \mbox{and}\ \ \ (c(a-c+1))/((c-1)(a-c))^{-1}$$ are best possible, and so is the constant
$${c(2c-3)}/((c-1)(2c-1))^{-1}$$ in \eqref{turan3}.

We also mention that the method of proving \eqref{turan} is completely different than of the proof of \cite[Theorem 4]{ismail}. Note also that the sharp Tur\'an type inequality \eqref{turan4} is in fact related to the following open problem \cite[p. 87]{bariczD}: is the function $q\mapsto V_q(x)$ log-convex on $(-1,\infty)$ for $x>0$ fixed? If this result were be true then would improve Alzer's result \cite[Theorem 3]{alzer}, which states that the function $q\mapsto V_q(x)$ is convex on $(-1,\infty)$ for all $x>0$ fixed.

Recently, for $x>0$ Simon \cite{simon} proved the next Tur\'an type inequalities
\begin{equation}\label{turan5}
\psi(a-1,c-1,x)\psi(a+1,c+1,x)-\psi^2(a,c,x)\leq \frac{1}{x}\psi^2(a,c,x)\psi(a+1,c+1,x),
\end{equation}
\begin{equation}\label{turan6}
\psi(a,c-1,x)\psi(a,c+1,x)-\psi^2(a,c,x)\leq \frac{1}{x}\psi(a,c,x)\psi(a,c-1,x).
\end{equation}
In \eqref{turan5} it is supposed that $a>1$ and $c<a+1,$ while in \eqref{turan6} it is assumed that $a\geq 1$ or $a>0$ and $c\leq a+2.$
By using \eqref{vqdef} the inequality \eqref{turan5} in particular reduces to
$$V_q(x)V_{q+2}(x)\leq V_{q+1}^2(x)\left(1+x^{-2(q+3)}V_{q+2}(x)\right),$$
where $q>-1$ and $x>0.$ Now, observe that by using the above mentioned Kummer transformation in \eqref{vqdef} we obtain
$$V_q(x)=\psi(1/2,1/2-q,x^2),$$ and by using this, \eqref{turan6} in particular reduces to
$$V_{q}(x)V_{q+2}(x)-V_{q+1}^2(x)\leq \frac{1}{x}V_{q+1}(x)V_{q+2}(x),$$
where $q>-1$ and $x>0.$ Combining this inequality with \eqref{turan4} for $q>-1$ and $x>0$ we obtain
$$-\frac{1}{x}V_{q+1}(x)V_{q+2}(x)\leq V_{q+1}^2(x)-V_{q}(x)V_{q+2}(x)<0.$$

\subsection{Connection with Mills ratio and some new bounds for this function}

In this subsection we would like to show that the inequalities presented above for the function $V_q$ can be used
to obtain many new results for the Mills ratio $m.$ For this, first recall that Mills' ratio $m$ satisfies the differential
equation \cite[p. 1365]{bariczmills} $m'(x)=xm(x)-1$ and hence
\begin{equation}\label{vodiffer}V_0'(x)=(\sqrt{2}\cdot m(x\sqrt{2}))'=2(\sqrt{2}x\cdot m(x\sqrt{2})-1)=2(xV_0(x)-1).\end{equation}
Note that this differentiation formula can be deduced also from \eqref{difvq}. Observe that by using \eqref{differ} and \eqref{vodiffer} we get
\begin{equation}\label{v0v1}2V_1(x)=(1-2x^2)V_0(x)+2x,\end{equation}
\begin{align*}8V_2(x)=(4x^4-4x^2+3)V_0(x)+2x(3-2x^2).\end{align*}

Now, if $q\to-1$ in \eqref{turan4}, then we get that the Tur\'an type inequality $$V_0^2(x)<V_{-1}(x)V_1(x)$$ is valid for $x>0,$ and this is equivalent to
$$2xV_0^2(x)+(2x^2-1)V_0(x)-2x<0.$$ From this we obtain that for $x>0$ the inequality
$$V_0(x)<\frac{1-2x^2+\sqrt{4x^4+12x^2+1}}{4x}$$
is valid, and rewriting in terms of Mills ratio we get
$$m(x)<\frac{1-x^2+\sqrt{x^4+6x^2+1}}{4x}.$$

Similarly, if we take $q=0$ in the left-hand side of \eqref{turan}, then we get
$$2V_0(x)V_2(x)<3V_1^2(x),$$ which can be rewritten as
$$4x(x^2-1)V_0^2(x)+(3-10x^2)V_0(x)+6x>0.$$
From this for $x>0$ we obtain
$$V_0(x)<\frac{10x^2-3-\sqrt{4x^4+36x^2+9}}{8x(x^2-1)},$$
which in terms of Mills ratio can be rewritten as
$$m(x)<\frac{5x^2-3-\sqrt{x^4+18x^2+9}}{4x(x^2-2)},$$
where $x>0.$ As far as we know these upper bounds on Mills ratio $m$ are new. We note that many other results of this kind can be obtained by using for example \eqref{turan4} for $q=0$ or by using the other Tur\'an type inequalities in the previous subsection.

Finally, we mention that if we take in \eqref{difvq} the value $q=0$ and we take into account that $V_0$ is strictly decreasing on $(0,\infty),$ we get the inequality $V_0(x)<1/x,$ which in terms of Mills ratio can be rewritten as $m(x)<1/x.$ This inequality is the well-known Gordon inequality for Mills' ratio, see \cite{gordon} for more details. Note that the inequality $V_0(x)<1/x$ can be obtained also from \eqref{bound}, just choosing $q=0$ and taking into account the relation \eqref{v0v1} between $V_0$ and $V_1.$ It is important to mention here that Gordon's inequality $m(x)<1/x$ is in fact a particular Tur\'an type inequality for the parabolic cylinder function, see \cite[p. 199]{ismail} for more details.

\subsection{Other results on Mills ratio and their generalizations}

It is worth to mention that it is possible to derive other inequalities for $V_q$ and its particular case $m$ by using the recurrence relations for this function. Namely, from \eqref{difvq} we get that $$V_q(x)<V_{q-1}(x)$$ for $q\geq 0$ and $x>0,$ and by using \eqref{differ} we obtain
$$(xV_q(x))'=2(q+1)(V_q(x)-V_{q+1}(x))>0,$$
where $x>0$ and $q>-1.$ On the other hand, by using \eqref{difvq} it follows
$$(xV_q(x))'=(2x^2+1)V_q(x)-2x^2V_{q-1}(x),$$
and from the previous inequality we get the inequality
\begin{equation}\label{boundquo}\frac{V_q(x)}{V_{q-1}(x)}>\frac{2x^2}{2x^2+1},\end{equation}
which holds for all $q\geq0$ and $x>0.$ Now, if we take $q=0$ and $q=1$ in \eqref{boundquo} we obtain the inequalities
$$\frac{2x}{2x^2+1}<V_0(x)<\frac{2x(2x^2+1)}{4x^4+4x^2-1},$$
where $x>0$ on the left-hand side, and $x\sqrt{2}>\sqrt{\sqrt{2}-1}$ on the right-hand side. This inequality in terms of Mills ratio can be rewritten as
\begin{equation}\label{boundmills}\frac{x}{x^2+1}<m(x)<\frac{x(x^2+1)}{x^4+2x^2-1},\end{equation}
where $x>0$ on the left-hand side and $x>\sqrt{\sqrt{2}-1}$ on the right-hand side. Observe that the right-hand side of \eqref{boundmills} is better than
Gordon's inequality $m(x)<1/x$ when $x>1.$ We also note that the left-hand side of \eqref{boundmills} is known and it was deduced by Gordon \cite{gordon}.

Now, let us consider the functions $f_1,f_2,f_3,f_4,f_5:(0,\infty)\to\mathbb{R},$ defined by
$$f_1(x)=\frac{x}{x^2+1},\ f_2(x)=\frac{1}{x},\ f_3(x)=\frac{x(x^2+1)}{x^4+2x^2-1},$$
$$f_4(x)=\frac{1-x^2+\sqrt{x^4+6x^2+1}}{4x},\ f_5(x)=\frac{5x^2-3-\sqrt{x^4+18x^2+9}}{4x(x^2-2)}.$$
Figure \ref{fig} shows that the above new upper bounds (for the Mills ratio of the standard normal distribution) are quite tight.

\begin{figure}[!ht]
   \centering
       \includegraphics[width=12cm]{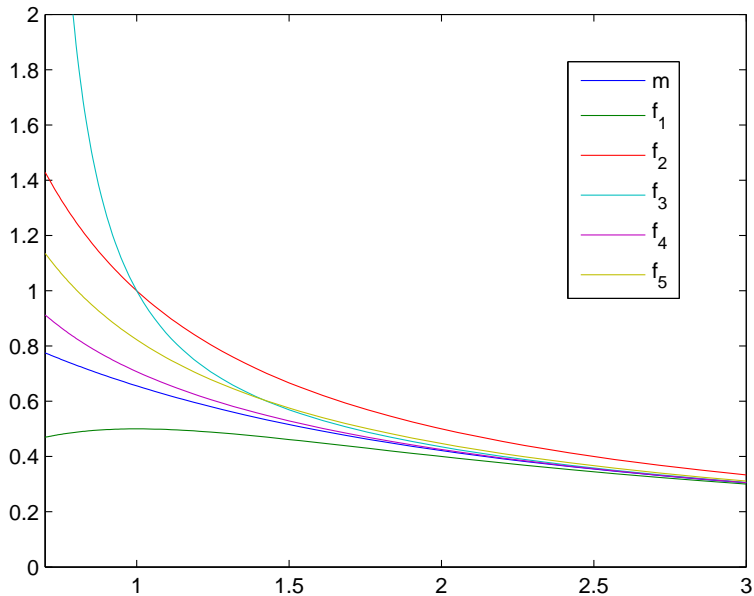}
       \caption{The graph of Mills' ratio $m$ of the standard normal distribution and of the bounds $f_1,$ $f_2,$ $f_3,$ $f_4$ and $f_5$ on $[0.7,3].$}
       \label{fig}
\end{figure}

\subsection{Connection with Gaussian hypergeometric functions}

Here we would like to show that $V_q$ can be expressed in terms of Gaussian hypergeometric functions. For this, we consider the following integral \cite[p. 18, Eq. 2.29]{Oberh}
   \begin{align*}
      \int_0^\infty \frac{x^{p-1}\, {\rm d}x}{(c+bx)^\nu\,(a+dx)^\mu} &= d^{-\mu}c^{-\nu+p}b^{-p}\, {\rm B}(p, \mu+\nu-p)\,
                         {}_2F_1\left( \mu, p; \mu+\nu; 1-\frac{ac}{bd}\right)\\
                      &= d^{-\mu+p}c^{-\nu}a^{-p}\, {\rm B}(p, \mu+\nu-p)\,
                         {}_2F_1\left( \nu, p; \mu+\nu; 1-\frac{bd}{ac}\right)\, ,
   \end{align*}
valid in both cases for all $0<p < \mu+\nu$. Specifying inside
   \[ a=c=d=1, \quad b=\frac{x^2}n, \quad \nu=n, \mu=\frac12, \quad p=q+1\, ,\]
we conclude
   \begin{align*}
      V_q(x) &= \dfrac{x^{2q+1}}{\Gamma(q+1)} \int_0^\infty \lim_{n \to \infty}\left( 1+ \frac{x^2\,t}n\right)^{-n}\,
                \frac{t^q\, {\rm d}t}{\sqrt{1+t}} \\
             &= \frac1x\, \lim_{n \to \infty} n^{q+1} \frac{\Gamma(n+\frac12-q)}{\Gamma(n+\frac32)}\,
                {}_2F_1\left( \frac12, q+1; n+\frac12; 1-\frac n{x^2}\right) \\
             &= \dfrac{x^{2q+1}}{\Gamma(q+1)} \lim_{n \to \infty} \frac{\Gamma(n+\frac12-q)}{\Gamma(n+\frac32)}\,
                {}_2F_1\left( n, q+1; n+\frac12; 1-\frac{x^2}n\right)\, .
   \end{align*}

\subsection{Lower and upper bounds for the function $V_q$} It is of considerable interest to find lower and upper bounds for the function $x \mapsto V_q(x)$ itself. Therefore remarking the obvious inequality $1+a \le {\rm e}^a,$ $a \in \mathbb R$, we conclude the following. Having in mind the integral expression \eqref{A0}, and specifying $a=u$, we get
   \begin{align*} \label{C0}
      V_q(x) &= \frac{x^{2q+1}}{\Gamma(q+1)} \int_0^\infty {\rm e}^{-x^2u}\, \frac{u^q}{\sqrt{1+u}}\, {\rm d}u\nonumber \\
             &\ge \frac{x^{2q+1}}{\Gamma(q+1)} \int_0^\infty {\rm e}^{-(x^2 + \frac12) u}\, u^q\, {\rm d}u \nonumber \\
             &= \frac{2^{q+1}\,x^{2q+1}}{(1+2x^2)^{q+1}}\, .\bigskip
   \end{align*}
Similarly, transforming the integrand of \eqref{A0} by the arithmetic mean--geometric mean inequality
$1+u \ge 2\sqrt{u},$ $u\ge0$, we get
   \[ V_q(x) \le \frac{x^{2q+1}}{\sqrt{2}\,\Gamma(q+1)} \int_0^\infty {\rm e}^{-x^2u}\, u^{q-\frac14}\, {\rm d}u
             = \frac{\Gamma(q+\frac34)}{\sqrt{2x}\,\Gamma(q+1)} \,. \]
Finally, choosing $a=x^2t^{-1}$, we get
   \begin{align*}
      V_q(x) &= \frac1{\Gamma(q+1)} \int_0^\infty {\rm e}^{-u}\, \frac{u^q}{\sqrt{x^2+u}}\, {\rm d}u\, \\
             &\ge \frac1{\Gamma(q+1)} \int_0^\infty {\rm e}^{-u- \frac{x^2}{2u}}\, u^{q-\frac12}\, {\rm d}u \\
             &= \frac1{\Gamma(q+1)} \, Z_1^{q+\frac12}\left( \frac{x^2}2\right)\, ,
    \end{align*}
where
   \[ Z_{\rho}^{\nu}(t)=\int_0^{\infty}u^{\nu-1}{\rm e}^{-u^{\rho}-\frac{t}{u}}\,{\rm d}u\,,\]
stands for the so-called Kr\"atzel function, see \cite{kratzel}, and also \cite{BJP1}. Note that further consequent inequalities have been
established for the Kr\"atzel function in \cite{BJP1}, compare \cite[Theorem 1]{BJP1}.

\end{document}